\newcounter{foo}
\newfont{\blb}{msbm10 scaled\magstep1}
\newfont{\comp}{cmr12 scaled\magstep1}
\newfont{\compb}{cmr10 scaled\magstep2}
\newfont{\sbb}{cmssbx10 scaled\magstep3}
\newfont{\sbbb}{cmssbx10 scaled\magstep5}
\newfont{\sbs}{cmssbx10 scaled\magstep1}
\newtheorem{theorem}{Theorem}
\newtheorem{lemma}{Lemma}
\newtheorem{proposition}{Proposition}
\newtheorem{definition}[foo]{Definition}
\title{On the order of Erd\H{o}s-Rogers functions}
\renewcommand\@date{{%
  \vspace{-\baselineskip}%
  \large\centering
  \begin{tabular}{@{}c@{}}
    Dhruv Mubayi\thanks{Department of Mathematics, Statistics, and Computer Science, University of Illinois at Chicago, Chicago, USA.  Research partially supported by NSF grants
 1952767, 2153576 and a Simons fellowship. E-mail: mubayi@uic.edu.} \end{tabular}%
\quad \quad \quad  \begin{tabular}{@{}c@{}}
    Jacques Verstraete\thanks{Department of Mathematics, University of California, San Diego. Research supported by the National Science Foundation FRG Award DMS-1952786.
    E-mail: jacques@ucsd.edu} \\
  \end{tabular}

\bigskip
\bigskip

  \today
}}
\begin{document}

\setlength{\droptitle}{-5em}

\maketitle

\begin{abstract}
\noindent For an integer $n \geq 1$, the Erd\H{o}s-Rogers function $f_{s}(n)$ is the
maximum integer $m$ such that every $n$-vertex $K_{s+1}$-free graph has a $K_s$-free subgraph with
$m$ vertices. It is known that for all $s \geq 3$, $f_{s}(n) = \Omega(\sqrt{n\log n}/\log \log n)$ as $n \rightarrow \infty$.
In this paper, we show  that for all $s \geq 3$,
\begin{equation*}
f_{s}(n) = O(\sqrt{n}\, \log n).
\end{equation*}
 This improves previous bounds of order $\sqrt{n} (\log n)^{2(s + 1)^2}$ by Dudek, Retter and R\"{o}dl.
\end{abstract}

\section{Introduction}

 For an integer $s \geq 2$, the {\em Erd\H{o}s-Rogers function} $f_{s}(n)$ is the
maximum integer $m$ such that every $n$-vertex $K_{s + 1}$-free graph has a $K_s$-free subgraph with
$m$ vertices. The determination of $f_2(n)$ is almost equivalent to determining the triangle-complete graph Ramsey numbers, since $r(3,f_2(n)) \leq n < r(3,f_2(n) + 1)$.
These quantities $r(3,t)$ are known to within a constant factor~\cite{AKS,BK10,FGM,K,Sh}, and from this one deduces $f_2(n)$ has order of magnitude $\sqrt{n\log n}$ as
$n \rightarrow \infty$.  As observed by Dudek and the first author, the arguments for lower bounds for $f_2(n)$ generalize to $f_{s}(n)$ for $s \geq 3$. Shearer~\cite{Sh2} showed that any $n$-vertex $K_{s + 1}$-free graph of maximum degree $d$ has
an independent set of size $\Omega((n\log d)/(d\log\log d))$, and the neighborhood of a vertex of degree $d$ is a $K_s$-free induced subgraph. Therefore for all $s \geq 3$,
\begin{equation}
f_{s}(n) = \Omega\Bigl(\frac{\sqrt{n\log n}}{\log\log n}\Bigr).
\end{equation}
Wolfovits~\cite{W} proved $f_3(n) = O(\sqrt{n}(\log n)^{120})$, and it was shown by Dudek,  Retter, R\"{o}dl~\cite{DRR} that $f_3(n) = O(\sqrt{n} \,(\log n)^{32})$ and more generally that $f_{s}(n) = O(\sqrt{n}\,(\log n)^{2(s+1)^2})$. In this short paper, we significantly improve these bounds on the Erd\H{o}s-Rogers functions $f_s(n)$ as follows:

\begin{theorem}\label{thm:main}
For each fixed $s \geq 3$,
\begin{equation}
f_{s}(n) \; \; = \; \; O(\sqrt{n} \, \log n).\end{equation}
\end{theorem}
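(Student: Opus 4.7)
The plan is to construct, for each large $n$ and each fixed $s \geq 3$, a $K_{s+1}$-free graph $G$ on $n$ vertices in which every subset of size $m := C\sqrt{n}\log n$ contains a copy of $K_s$; this yields $f_s(n) \leq m$ and hence the theorem. I would begin with a random candidate $G_0 \sim G(N,p)$ on $N = (1+o(1))n$ vertices, with $p = p(N,s)$ chosen so that the expected number of copies of $K_{s+1}$ in $G_0$ is $o(N)$. Deleting one vertex from each copy of $K_{s+1}$ then produces a $K_{s+1}$-free graph $G$ on at least $n$ vertices.

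The bulk of the work is to show that, with positive probability over $G_0$, every $m$-subset of $V(G_0)$ induces a subgraph containing a $K_s$. I would fix an $m$-subset $S$ and apply Janson's inequality to the family of potential copies of $K_s$ in $G_0[S]$. Writing $\mu = \binom{m}{s}p^{\binom{s}{2}}$ for the expected number of such copies and $\Delta$ for the standard clustering sum $\sum \Pr[K \cup K' \subseteq G_0]$ over ordered pairs of $K_s$'s in $S$ sharing at least one edge, Janson gives $\Pr[G_0[S] \text{ is } K_s\text{-free}] \leq \exp(-\mu^2/(2\Delta))$. The target is $\mu^2/\Delta \geq 2m \log n$, so that a union bound over the $\binom{N}{m} \leq e^{m \log n}$ choices of $S$ succeeds.

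The main obstacle is to choose $p$ (and possibly to refine the model) so that both constraints are met at the sharp scale $m = O(\sqrt{n}\log n)$: keeping $K_{s+1}$'s rare globally needs $p$ small, while forcing $K_s$'s in every $m$-subset needs $p$ large. For $s=3$, a direct calculation with $p = \Theta(n^{-1/2})$ essentially matches both conditions at the target $m$. For $s \geq 4$, a single-scale $G(N,p)$ appears insufficient: the exponent of $n$ in $\mu$ becomes negative. I would then replace $G_0$ by a random spanning subgraph of an explicit pseudorandom $K_{s+1}$-free host $H$ (so that the cluster term $\Delta$ is controlled by the spectral gap of $H$, drastically sharpening the Janson bound), or argue inductively through neighborhoods, using that $G$ is $K_{s+1}$-free if and only if every neighborhood is $K_s$-free, combined with the Erd\H{o}s--Rogers bound for $f_{s-1}$. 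Squeezing the polylogarithmic factor down from $(\log n)^{2(s+1)^2}$ in~\cite{DRR} to exactly $\log n$ is the technical heart of the improvement, and is the step I would expect to require the most delicate analysis.
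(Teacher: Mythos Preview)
Your proposal has a genuine gap: the pure $G(N,p)$ approach cannot reach $m = O(\sqrt{n}\log n)$, even for $s=3$. With $p = cn^{-1/2}$ the expected number of $K_4$'s is $\Theta(n^4 p^6) = \Theta(n)$, not $o(n)$, so vertex-deletion already fails; and if you push $p$ below $n^{-1/2}$ to make deletion harmless, then for an $m$-set the expected triangle count is $\mu \sim m^3 p^3 = O((\log n)^3)$, while the union bound over $\binom{n}{m}$ sets requires $\mu \gtrsim m\log n \sim \sqrt{n}(\log n)^2$. The gap is polynomial in $n$, not polylogarithmic, so no sharpening of Janson or the deletion step rescues it. Your fallback suggestions (an unspecified pseudorandom $K_{s+1}$-free host, or induction via neighborhoods) are not a proof: identifying the right host \emph{is} the content of the theorem, and neighborhood induction does not obviously propagate the single $\log n$ factor.

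The paper's argument is structurally different. It starts from the Hermitian unital $\mathcal{H}_q$, whose key feature is the absence of O'Nan configurations: any $s+1$ pairwise-intersecting lines are either all concurrent at a point or form an $(s+1)$-fan (Lemma~\ref{no-onan}). One first samples points of $\mathcal{H}_q$ with probability $\Theta((\log q)/q)$ so that every line retains $\Theta(\log q)$ points and every edge of the line-intersection graph $G$ lies in only $(\log q)^{O(s)}$ fans. Copies of $K_{s+1}$ coming from concurrent lines are killed by an independent random $s$-colouring at each point; those coming from fans are killed by sampling edges of $G$ with probability $\rho = \Theta((\log q)^{-2/s})$. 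The two constraints --- no $K_{s+1}$ survives, yet every set of $\Theta(q^2\log q)$ lines still induces a $K_s$ --- are verified simultaneously via the Lov\'asz local lemma (Proposition~\ref{local}), with Janson's inequality (Proposition~\ref{janson}) supplying the probability bound for the second. The algebraic structure, not a generic pseudorandomness hypothesis, is what makes the dependency count in the local lemma small enough to close.
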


The proof of Theorem \ref{thm:main} involves a combination of the ideas of Wolfovits~\cite{W} and Dudek, Retter, R\"odl~\cite{DRR} with
the construction of Mattheus and the second author~\cite{MattV}, but does not make use of the method of containers as in~\cite{MattV} or~\cite{JS}. We did not expend too much effort in optimizing the implicit constant in the bound on $f_{s}(n)$ in
Theorem \ref{thm:main}; from the proof one may obtain $f_{s}(n) \leq 2^{100s} \sqrt{n} \log n$ for $n \geq 2$, which shows
$f_s(n) = n^{1/2 + o(1)}$ for $s = o(\log n)$.

\newpage

{\bf Notation.} For a graph $G$, we write $V(G)$ for the vertex set of $G$ and $E(G)$ for the edge set of $G$. For a set $X \subseteq V(G)$, let $G[X]$ denote the subgraph of $G$ {\em induced} by $X$,
namely the graph with vertex set $X$ and edge set $\{e \in E(G) : e \subseteq X\}$.

\section{Tools from probability}\label{prob}

We refer to the book by Alon and Spencer~\cite{AS} for a reference on probabilistic methods in combinatorics.
The first result we need is the {\em Chernoff Bound} (see Alon and Spencer~\cite{AS}) for concentration of binomial random variables.

\begin{proposition}\label{chernoff} {\bf (Chernoff Bound)}
Let $Z$ be a binomial random variable with mean $\mu$. Then for any real $\epsilon \in [0,1]$,
\begin{align}
  \label{eq:chernoff}
  \begin{split}
    \Pr(Z > (1 + \epsilon)\mu) &\leq \exp\Bigl(-\frac{\epsilon^2 \mu}{4}\Bigr) \quad \quad \mbox{\rm and }\\
    \Pr(Z < (1 - \epsilon)\mu) &\leq \exp\Bigl(-\frac{\epsilon^2 \mu}{2}\Bigr)
  \end{split}
\end{align}
\end{proposition}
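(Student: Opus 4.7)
The plan is to apply the standard exponential moment method (the Cram\'er--Chernoff technique). Write $Z = X_1 + \cdots + X_n$ as a sum of independent Bernoulli random variables with parameters $p_1, \ldots, p_n$ summing to $\mu$; this representation exists because $Z$ is binomial. For the upper tail, first I would introduce a free parameter $t > 0$ and apply Markov's inequality to the nonnegative random variable $e^{tZ}$, giving
$$\Pr(Z > (1+\epsilon)\mu) \;=\; \Pr(e^{tZ} > e^{t(1+\epsilon)\mu}) \;\leq\; e^{-t(1+\epsilon)\mu}\, \EE[e^{tZ}].$$
Independence factors the moment generating function as $\EE[e^{tZ}] = \prod_i (1 + p_i(e^t - 1))$, and the universal inequality $1 + x \leq e^x$ upgrades this to $\EE[e^{tZ}] \leq \exp((e^t - 1)\mu)$.

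Next I would optimize the parameter $t$ by setting $t = \ln(1+\epsilon)$, which minimizes the resulting bound and produces
$$\Pr(Z > (1+\epsilon)\mu) \;\leq\; \exp\bigl(-\mu\bigl[(1+\epsilon)\ln(1+\epsilon) - \epsilon\bigr]\bigr).$$
The remaining task is to verify the elementary calculus inequality $(1+\epsilon)\ln(1+\epsilon) - \epsilon \geq \epsilon^2/4$ on the interval $[0,1]$. I would do this by a two-derivative check: the difference $h(\epsilon) = (1+\epsilon)\ln(1+\epsilon) - \epsilon - \epsilon^2/4$ satisfies $h(0) = h'(0) = 0$ and $h''(\epsilon) = 1/(1+\epsilon) - 1/2 \geq 0$ for $\epsilon \in [0,1]$, so $h \geq 0$ on that range.

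The lower tail follows the same template with $e^{-tZ}$ in place of $e^{tZ}$ and the optimal choice $t = -\ln(1-\epsilon) > 0$, leading to the bound $\exp(-\mu[(1-\epsilon)\ln(1-\epsilon) + \epsilon])$; the corresponding inequality $(1-\epsilon)\ln(1-\epsilon) + \epsilon \geq \epsilon^2/2$ on $[0,1]$ is even cleaner, as the second derivative of the difference is $\epsilon/(1-\epsilon) \geq 0$. There is no serious obstacle: the whole argument is a one-page calculation. If forced to name a subtle point, I would flag the asymmetric constants $1/4$ and $1/2$ in the two exponents, since a symmetric (and weaker) version would follow from cruder Taylor estimates, whereas the stated constants require the specific monotonicity checks just outlined.
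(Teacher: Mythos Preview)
Your argument is correct and is exactly the standard Cram\'er--Chernoff derivation; the calculus checks for the two auxiliary inequalities are clean and valid on $[0,1]$. Note, however, that the paper does not supply its own proof of this proposition: it is stated as a quoted tool with a reference to Alon and Spencer~\cite{AS}, so there is nothing to compare your route against beyond confirming that what you wrote is the textbook proof that reference would contain.
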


\bigskip

The next proposition is derived from {\em Janson's inequality}~\cite{AS,J}. Let $\chi$ be a coloring of an $n$-element set $Y$ with $s$ colors, with color classes $Y_1,Y_2,\dots,Y_s$. Then the {\em random $s$-partite graph}
$G_{n,\rho}(\chi)$ samples edges independently with probability $\rho$ from the complete multipartite graph
with parts $Y_1,Y_2,\dots,Y_s$. The following technical proposition is derived in a standard way from Janson's inequality~\cite{AS,J}, and we give a proof in the appendix:

\begin{proposition} {\bf (via Janson's inequality)} \label{janson}
Let $s \geq 3$, $n \geq 2^{40s}$ and $\rho = (8s/n)^{2/s}$, and let $\chi$ be an $s$-coloring of an $n$-element set
whose color classes have size at least $n/2s$ each. Then
\begin{equation}\label{jbound}
 \Pr(K_s \not\subseteq G_{n,\rho}(\chi)) \leq \exp(-2^{2s-4}n).
\end{equation}
\end{proposition}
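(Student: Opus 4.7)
The plan is to apply Janson's inequality to the family of events $(B_T)$ indexed by \emph{transversals} of $\chi$, that is, $s$-tuples $T=(v_1,\dots,v_s)$ with $v_i\in Y_i$. Let $B_T$ be the event that all $\binom{s}{2}$ potential edges among $\{v_1,\dots,v_s\}$ appear in $G_{n,\rho}(\chi)$. Since $G_{n,\rho}(\chi)$ is $s$-partite with parts $Y_1,\dots,Y_s$, every $K_s$ it contains must be a transversal, and hence $\{K_s\not\subseteq G_{n,\rho}(\chi)\}=\{X=0\}$ for $X:=\sum_T\mathbf{1}_{B_T}$. Writing $\mu=\EE[X]$ and $\Delta=\sum_{T\ne T',\,T\sim T'}\Pr(B_T\cap B_{T'})$ for the standard Janson quantities (where $T\sim T'$ means $|T\cap T'|\ge 2$, i.e.\ the transversals share at least one potential edge), Janson's inequality gives
\[
\Pr(X=0)\ \le\ \exp\!\Bigl(-\mu+\tfrac{\Delta}{2}\Bigr).
\]
The target bound will follow at once from the two estimates $\mu\ge 2^{2s-3}n$ and $\Delta\le\mu$.

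For the lower bound on $\mu$, write $n_i:=|Y_i|$, so that $\mu=(8s/n)^{s-1}\prod_i n_i$. Minimising $\prod_i n_i$ over the polytope $\{n_i\ge n/(2s),\,\sum_i n_i=n\}$ is a routine convexity exercise: the minimum is attained at an extreme point where $s-1$ of the $n_i$ take the value $n/(2s)$ and the remaining one equals $n(s+1)/(2s)$, giving $\prod_i n_i\ge(s+1)n^s/(2s)^s$. Substituting then yields $\mu\ge\tfrac{s+1}{s}\cdot 2^{2s-3}n\ge 2^{2s-3}n$, as wanted.

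For the upper bound on $\Delta$, stratify by $k:=|T\cap T'|\in\{2,\dots,s-1\}$. The number of ordered pairs $(T,T')$ with $|T\cap T'|\ge k$ is at most $\binom{s}{k}n^{2s-k}$, and each such pair contributes $\rho^{2\binom{s}{2}-\binom{k}{2}}$, so
\[
\Delta\ \le\ \sum_{k=2}^{s-1}\binom{s}{k}\,n^{2s-k}\,\rho^{2\binom{s}{2}-\binom{k}{2}}.
\]
Dividing the $k$-th term by the bound $\mu\ge 2^{2s-3}n$ and substituting $\rho=(8s/n)^{2/s}$ shows, after cancellation, that it equals $C_{s,k}\cdot n^{1-k+k(k-1)/s}$ for a prefactor $C_{s,k}$ depending only on $s$. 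The exponent $1-k+k(k-1)/s$ is convex in $k$ and therefore attains its maximum over $\{2,\dots,s-1\}$ at the endpoints, where it equals $(2-s)/s<0$. The main obstacle is to verify that the generous hypothesis $n\ge 2^{40s}$ overwhelms the prefactors $C_{s,k}$, which are of order $s^{O(s)}2^{O(s)}$; since $n^{(s-2)/s}\ge 2^{40(s-2)}$, this is a routine arithmetic check. Collecting yields $\Delta\le\mu$, whence by Janson $\Pr(X=0)\le\exp(-\mu/2)\le\exp(-2^{2s-4}n)$, as required.
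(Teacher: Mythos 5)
Your proposal is essentially the paper's argument: apply Janson's inequality to the family of transversal $K_s$'s, lower-bound $\mu$ by minimizing $\prod|Y_i|$ at an extreme point of the simplex (giving $\mu \geq 2^{2s-3}n$), and then show $\Delta\leq\mu$; the $\exp(-\mu+\Delta/2)$ form you cite is the same statement as the paper's ``$\Delta\leq\mu$ implies $\Pr\leq\exp(-\mu/2)$.'' The exponent computation $1-k+k(k-1)/s$ matches the paper's, and the observation that it is convex with common value $(2-s)/s$ at $k=2$ and $k=s-1$ is correct.

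Where you differ from the paper, and where you need to be more careful, is the final estimate of $\Delta/\mu$. You bound the number of pairs at overlap $k$ by the very crude $\binom{s}{k}n^{2s-k}$ (using $|Y_i|\leq n$), whereas the paper cancels $\prod_{i\in S}|Y_i|$ against $\mu$ and uses AM--GM to get the tighter $(n/(s-k))^{s-k}\binom{s}{k}$. This would be harmless on its own, but your closing sentence --- ``since $n^{(s-2)/s}\geq 2^{40(s-2)}$, this is a routine arithmetic check'' --- implicitly bounds every term by the \emph{endpoint} power $n^{(2-s)/s}$ times $\max_k C_{s,k}$. That uniform bound does not close for all $s$: the prefactor at $k\approx s/2$ is of order $(8s)^{7s/4}$, i.e.\ $2^{\Theta(s\log s)}$, which eventually dominates $2^{40(s-2)}$. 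What saves the estimate is that at $k\approx s/2$ the $n$-exponent is $\approx -s/4$, far more negative than $(2-s)/s$, so the term is in fact tiny; but this requires comparing $C_{s,k}$ against $n^{1-k+k(k-1)/s}$ term-by-term rather than against the worst exponent. The paper handles exactly this by splitting the sum at $i\approx s/\log(8s)$ and checking each range explicitly. So your plan is correct in outline, and the convexity observation is a nice way to identify the binding cases, but the ``routine'' check must be a per-$k$ comparison; the endpoint-only bound you gesture at is not quite sufficient.
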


\bigskip

We shall finally require the {\em Lov\'{a}sz local lemma}~\cite{AS,L} in the following form. We write $\overline{A}$ for the complement of an event $A$ in a probability space.

\begin{proposition} {\bf (Lov\'{a}sz local lemma)} \label{local}
Let $A_1,A_2,\dots,A_n$ be events in some probability space and suppose that for some
set $J_i \subset \{1,2,\dots,n\}$, $A_i$ is mutually independent of
$\{A_j : j \not \in J_i \cup \{i\}\}$. If there exist real numbers
$\gamma_i \in [0,1)$ such that
\begin{equation}\label{lll}
\Pr(A_i) \leq \gamma_{i}
\prod_{j \in J_{i}} (1 - \gamma_{j}),
\end{equation}
then
\[\Pr\Bigl(\bigcap_{i=1}^n \overline{A_i}\Bigr) > 0.\]
\end{proposition}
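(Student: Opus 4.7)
The plan is to establish, by induction on $|S|$, the stronger claim that for every $i$ and every $S \subseteq \{1,\ldots,n\}\setminus\{i\}$,
\[
\Pr\Bigl(A_i \,\Big|\, \bigcap_{j \in S}\overline{A_j}\Bigr) \;\leq\; \gamma_i.
\]
Passing to this strengthening is the main conceptual move: the bare bound $\Pr(A_i) \leq \gamma_i$ is not strong enough to feed back into the induction, because the denominator we have to control below is itself a conditional probability. Once the stronger claim is in hand, ordering the indices arbitrarily and unfolding by the chain rule gives
\[
\Pr\Bigl(\bigcap_{i=1}^n \overline{A_i}\Bigr) \;=\; \prod_{i=1}^n \Pr\Bigl(\overline{A_i} \,\Big|\, \bigcap_{j<i}\overline{A_j}\Bigr) \;\geq\; \prod_{i=1}^n (1 - \gamma_i) \;>\; 0,
\]
which is the desired conclusion.

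For the base case $S = \emptyset$, the hypothesis \eqref{lll} gives $\Pr(A_i) \leq \gamma_i$ since $\prod_{j \in J_i}(1-\gamma_j) \leq 1$. For the inductive step I would split $S$ into $S_1 = S \cap J_i$ and $S_2 = S \setminus J_i$ and write the target probability as a ratio via Bayes' rule,
\[
\Pr\Bigl(A_i \,\Big|\, \bigcap_{j \in S}\overline{A_j}\Bigr) \;=\; \frac{\Pr\bigl(A_i \cap \bigcap_{j \in S_1}\overline{A_j} \,\big|\, \bigcap_{j \in S_2}\overline{A_j}\bigr)}{\Pr\bigl(\bigcap_{j \in S_1}\overline{A_j} \,\big|\, \bigcap_{j \in S_2}\overline{A_j}\bigr)}.
\]
The numerator is at most $\Pr(A_i \mid \bigcap_{j \in S_2}\overline{A_j}) = \Pr(A_i)$ by the assumed mutual independence of $A_i$ from events indexed outside $J_i \cup \{i\}$, and the hypothesis \eqref{lll} then bounds this by $\gamma_i \prod_{j \in J_i}(1-\gamma_j)$. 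For the denominator I would expand it by the chain rule along $S_1 = \{j_1,\ldots,j_k\}$, noting that each factor $\Pr(\overline{A_{j_\ell}} \mid \bigcap_{j \in S_2}\overline{A_j} \cap \bigcap_{m<\ell}\overline{A_{j_m}})$ is conditioned on a set of size strictly less than $|S|$, so the induction hypothesis applies and gives a lower bound of $1-\gamma_{j_\ell}$ for each; multiplying produces at least $\prod_{j \in S_1}(1-\gamma_j) \geq \prod_{j \in J_i}(1-\gamma_j)$. Dividing yields $\gamma_i$.

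The main obstacle, modest as it is, is really setting up the induction correctly: one must prove the conditional bound rather than the unconditional one, and the split $S = S_1 \sqcup S_2$ is forced because independence lets one discard the $S_2$-conditioning in the numerator, while induction is exactly what handles the $S_1$-conditioning in the denominator. Beyond this structural choice, the argument is mechanical and needs no probabilistic input beyond Bayes' rule, the chain rule, and the stated mutual-independence hypothesis.
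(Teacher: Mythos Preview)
Your argument is correct and is precisely the standard inductive proof of the general (asymmetric) Lov\'asz local lemma as given, e.g., in Alon--Spencer~\cite{AS}. The paper itself does not supply a proof of Proposition~\ref{local}; it is merely quoted as a known tool with references~\cite{AS,L}, so there is nothing further to compare.
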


\bigskip

\section{Hermitian unitals and O'Nan configurations}

The proof of Theorem \ref{thm:main} appeals to a construction from projective geometry. We briefly
describe the geometry here and the tools from probabilistic combinatorics that we use.
Further geometric background is given in Barwick and Ebert~\cite{BE}, Brouwer and van Maldeghem~\cite{BVM22} and Piper~\cite{P}.

\subsection{Hermitian unitals in brief}

For a partial linear space $\mathcal{H}$, we let $P(\mathcal{H})$ denote the set of points of $\mathcal{H}$ and $L(\mathcal{H})$ denote the set of lines of $\mathcal{H}$.
A {\em unital} in the projective plane $\mathrm{PG}(2,q^2)$ is a set $\mathcal{U}$ of $q^3 + 1$ points such that every line of $\mathrm{PG}(2,q^2)$ intersects $\mathcal{U}$ in $1$ or $q + 1$ points -- the latter are referred to as {\em secants}. A {\em classical} or {\em Hermitian unital} $\mathcal{H}_q$ is a partial
linear space described in homogeneous co-ordinates as the following set of one-dimensional subspaces of $\mathbb F_{q^2}^3$:
\begin{equation*}
P(\mathcal{H}_q) = \{\left<x,y,z\right> \subset \mathbb F_{q^2}^3 : x^{q + 1} + y^{q + 1} + z^{q + 1} = 0\}.
\end{equation*}
Here arithmetic is in the finite field $\mathbb F_{q^2}$, and $\left<x,y,z\right>$ is the one-dimensional subspace of $\mathbb F_{q^2}^3$ generated by $(x,y,z)$. Then
$L(\mathcal{H}_q)$ consists of the intersections of secant lines with $P(\mathcal{H}_q)$, so that there are $q^2(q^2 - q + 1)$ lines in $\mathcal{H}_q$,
each containing exactly $q + 1$ points of $\mathcal{H}_q$.

\subsection{O'Nan configurations and fans}

One of the remarkable features of the Hermitian unital is that it does not contain the
so-called {\em O’Nan configuration}, namely the configuration of
four lines and six points in the left figure below:

\begin{center}
\includegraphics[width=3.3in]{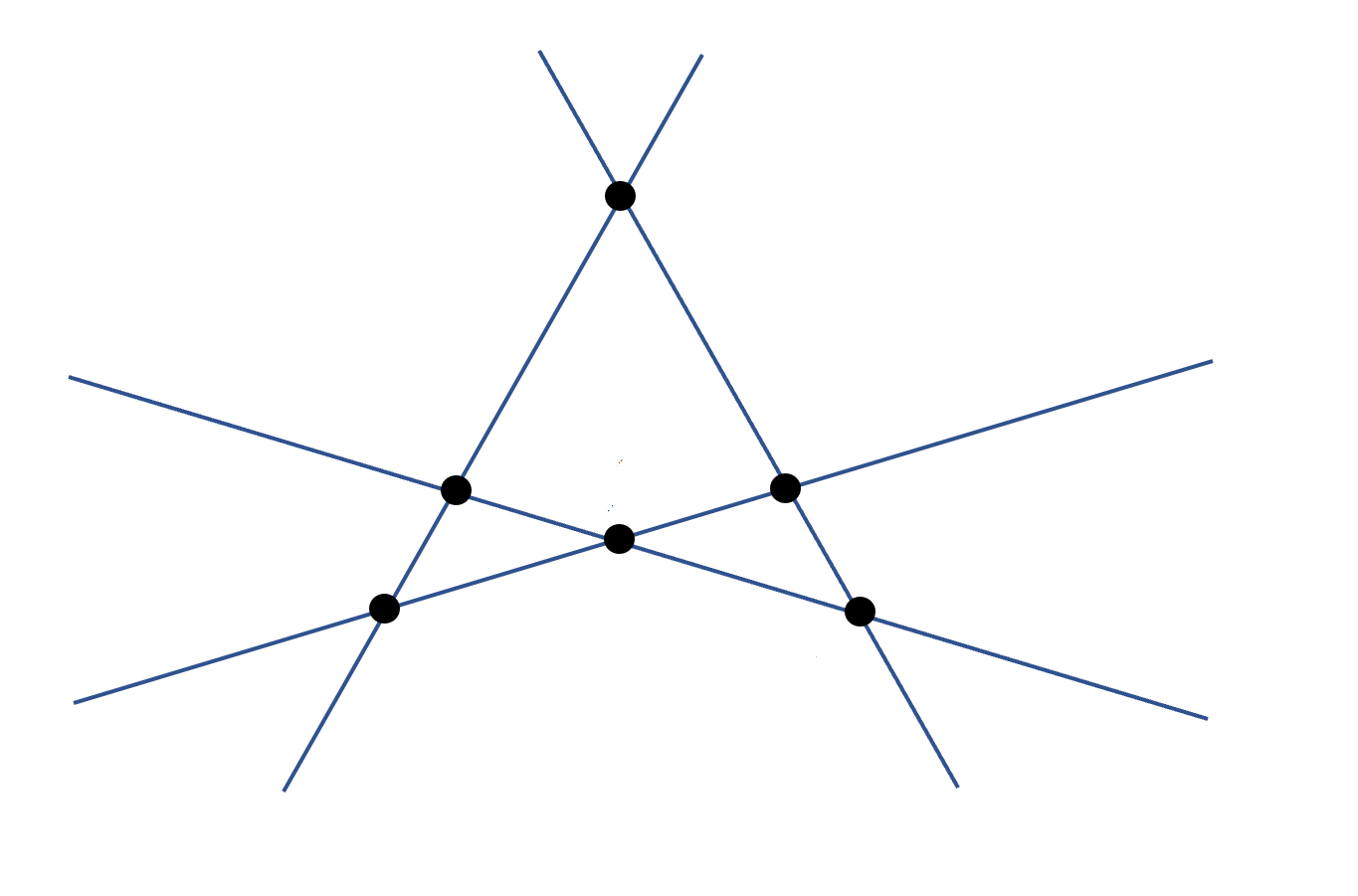}\includegraphics[width=3.3in]{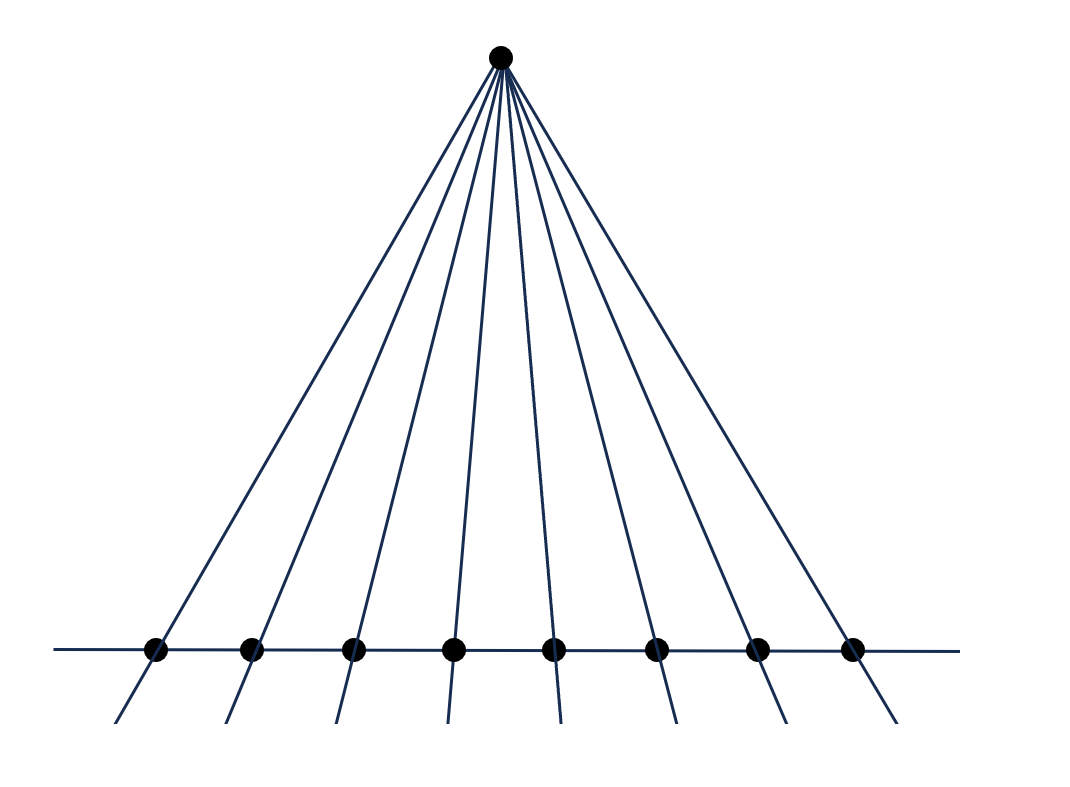}
\newline
O'Nan configuration and $s$-fan
\end{center}

\begin{definition} {\bf ($\boldsymbol{s}$-fan)}
For $s \geq 3$, an {\em $s$-fan} is a set of $s$ pairwise intersecting lines such that $s - 1$ of the lines are concurrent with a point whereas
the remaining line is not concurrent with that point. For $s \geq 4$, the unique point contained in $s - 1$ lines is the {\em point of concurrency} of the $s$-fan.
\end{definition}

An illustration of an $s$-fan is shown in the right figure above.  The fact that $\mathcal{H}_q$ does not contain the figure on the left was first proved by O'Nan~\cite{onan} (see Mattheus and the second author for a short linear-algebraic proof~\cite{MattV}). The following lemma is a straightforward consequence:

\begin{lemma}\label{no-onan}
If $s \geq 3$ lines in $\mathcal{H}_q$ pairwise cross, then they are concurrent with some point of $\mathcal{H}_q$ or they form an $s$-fan.
\end{lemma}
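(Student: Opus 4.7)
The plan is to reduce the lemma to the absence of O'Nan configurations in $\mathcal{H}_q$ by a short case analysis. When $s = 3$, there is nothing to prove: three pairwise-crossing lines either share a common point, or have three distinct pairwise intersection points, and the latter situation is by definition a $3$-fan. So I would fix $s \geq 4$ from here on and assume the lines $\ell_1,\dots,\ell_s$ are pairwise crossing but not concurrent at a single point.

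The first step is to locate three of them that are not concurrent. Here I would use that $\mathcal{H}_q$ is a partial linear space, so two distinct lines meet in at most one point: if every triple were concurrent, then for every $k \geq 3$ the lines $\ell_1,\ell_2,\ell_k$ would share the unique point $\ell_1\cap\ell_2$, making all $s$ lines concurrent, a contradiction. After relabeling I may assume $\ell_1,\ell_2,\ell_3$ are not concurrent, so their pairwise intersections yield three distinct points $p_{12}, p_{13}, p_{23}$.

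The second step is to apply the no-O'Nan property to the four pairwise-crossing lines $\{\ell_1,\ell_2,\ell_3,\ell_k\}$ for each $k \in \{4,\dots,s\}$. Some three of them must be concurrent, and since $\ell_1,\ell_2,\ell_3$ themselves are not, the concurrent triple must contain $\ell_k$. Hence $\ell_k$ passes through one of the triangle vertices $p_{12}, p_{13}, p_{23}$.

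The final step is to show that all such $\ell_k$ pass through the \emph{same} vertex. If two of them, say $\ell_k$ through $p_{12}$ and $\ell_m$ through $p_{13}$, went through different vertices, I would examine the four pairwise-crossing lines $\{\ell_2,\ell_3,\ell_k,\ell_m\}$ and check that no three are concurrent: in each of the four possible triples, a concurrence would force either $\ell_k$ or $\ell_m$ to pass through both $p_{12}$ and $p_{13}$, hence to coincide with the unique line $\ell_1$ through those two points, a contradiction. So $\{\ell_2,\ell_3,\ell_k,\ell_m\}$ would be an O'Nan configuration, which is forbidden. Thus all of $\ell_4,\dots,\ell_s$ pass through a common vertex, say $p_{12}$, and together with $\ell_1,\ell_2$ this gives $s-1$ lines through $p_{12}$ while $\ell_3$ avoids $p_{12}$ — an $s$-fan. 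The only delicate point is this last exhaustive case check, which I expect to be the main (if still quite routine) obstacle; everything else is bookkeeping around two facts, that $\mathcal{H}_q$ is partial linear and contains no O'Nan configuration.
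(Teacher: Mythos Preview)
Your proof is correct and takes a somewhat different organizational route from the paper's. Both arguments rest entirely on the absence of O'Nan configurations, but they pivot on opposite starting objects: the paper first locates three \emph{concurrent} lines (using the $s=4$ case as a black box) and then argues by contradiction that at most one remaining line can miss their common point, whereas you first locate three \emph{non-concurrent} lines forming a triangle and then show directly that every remaining line must pass through one fixed vertex of that triangle. Your version treats $s=4$ and $s>4$ uniformly, while the paper handles $s=4$ as a separate base case feeding an inductive-style step for $s>4$; your triangle picture also makes the final $s$-fan conclusion more transparent.

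One small imprecision in your sketch: in the case check on $\{\ell_2,\ell_3,\ell_k,\ell_m\}$, the two triples containing both $\ell_2$ and $\ell_3$ would, if concurrent, force $\ell_k$ or $\ell_m$ through $p_{23}$ (not through both $p_{12}$ and $p_{13}$), so the resulting coincidence is with $\ell_2$ or $\ell_3$ rather than with $\ell_1$. The contradiction is identical in spirit, and since you already flag this exhaustive check as the place needing care, this does not affect the soundness of the argument.
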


\begin{proof}
Let $K \subseteq L(\mathcal{H}_q)$ be a configuration of $s$ pairwise crossing lines. In the case $s = 3$, if the lines are not all concurrent with some point, then
they form a triangle, with is an $s$-fan. The case $s = 4$ follows from the fact that $\mathcal{H}_q$ contains no O'Nan configuration. If $s > 4$, then at least three of the lines $\ell_1,\ell_2,\ell_3 \in K$ are concurrent with some point $p \in \mathcal{H}_q$. Suppose for a contradiction that two of the lines $\ell_4,\ell_5 \in K$ are not concurrent with $p$.
Then three of $\ell_1,\ell_2,\ell_4,\ell_5$ are concurrent with some $p' \in P(\mathcal{H}_q) \backslash \{p\}$.
This implies $\ell_1,\ell_2,\ell_3,\ell_4,\ell_5$ are concurrent with $p'$, contradicting that $\ell_4,\ell_5$ are concurrent with $p$.
\end{proof}

\section{Random sampling}

To prove Theorem \ref{thm:main}, we require for $s \geq 3$ an $n$-vertex $K_{s+1}$-free graph $H$ such that every induced subgraph of $H$ with subtantially more than about $\sqrt{n}\log n$ vertices contains a copy of $K_s$. The overview of the construction of $H$ is as follows, where asymptotic notation is with respect to a growing prime power $q$.

\bigskip

First we randomly sample points from the Hermitian unital $\mathcal{H}_q$ with probability roughly $\Theta(\log q/(q + 1))$ for to obtain a partial linear space $\mathcal{H}$. We show in Section \ref{randompoints} via the Chernoff bound, Proposition~\ref{chernoff}, that with positive probability, $|P(\mathcal{H})| = \Theta(q^2\log q)$, $|L(\mathcal{H})| = q^2(q^2 - q + 1)$, each line in $L(\mathcal{H})$ has size $\Theta(\log q)$, and the number of $(s + 1)$-fans containing any given pair of intersecting lines in $\mathcal{H}$ is $\Theta(\log q)^s$.

\bigskip

We define the {\em intersection graph} $G$ whose vertex set is $L(\mathcal{H})$ and whose edges are pairs of intersecting lines of $\mathcal{H}$. The graph $G$ has $q^2(q^2 - q + 1)$ vertices and every edge of $G$ is contained in $\Theta(\log q)^s$ copies of $K_{s+1}$ in $G$. Most importantly,

\begin{equation}\tag{A}
\parbox{4.3in}{\sl the vertex set of each $K_{s+1} \subseteq G$ is either an $(s + 1)$-fan in $\mathcal{H}$ or is a set of lines of $\mathcal{H}$ all concurrent with some point in $\mathcal{H}$. }
\end{equation}

\medskip

We eliminate the copies of $K_{s+1} \subseteq G$ corresponding to $s + 1$ lines of $\mathcal{H}$ concurrent with a point $p$ of $\mathcal{H}$ by randomly $s$-coloring the set $\mathcal{H}_p$ of lines concurrent with $p$, independently over different points of $\mathcal{H}$, and retaining only those edges $\{\ell,\ell'\}$ of $G$ such that $\ell$ and $\ell'$ have different colors in the
coloring of $\mathcal{H}_p$ where $p = \ell \cap \ell'$. By (A), this graph $G_{\chi}$ has the following property:

\begin{equation}\tag{B}
\parbox{3.7in}{\sl the vertex set of each $K_{s+1} \subseteq G_{\chi}$ is an $(s + 1)$-fan.}
\end{equation}

We eliminate the copies of $K_{s + 1}$ in $G$ which correspond to $(s + 1)$-fans in $\mathcal{H}$ by randomly sampling edges of $G$ independently with suitable probability $\rho = \Theta(\log q)^{-2/s}$ to obtain a random graph $G_{\rho} \subseteq G$.
Let $H$ be the intersection of $G_{\rho}$ and $G_{\chi}$, that is, $V(H) = V(G)$ and $E(H) = E(G_{\rho}) \cap E(G_{\chi})$.
We apply the Lov\'{a}sz local lemma (Proposition~\ref{local}) in Section \ref{theproof} to show that with positive probability,
the graph $H$ is $K_{s + 1}$-free and, for a constant $C$ depending only on $s$, every set of $Cq^2\log q$ vertices of $H$ induces a subgraph of $H$ containing $K_s$. Then $|V(H)| = n = q^2(q^2 - q + 1)$ and so $f_s(n) \leq Cq^2\log q$. The distribution of primes $q$ allows us to deduce $f_s(n) = O(\sqrt{n}\log n)$ for all $n$.

\subsection{Randomly sampling points}\label{randompoints}

The first step in our construction is to randomly sample points from $\mathcal{H}_q$ to obtain a partial linear space $\mathcal{H}$ with the properties listed below, which essentially uses only the Chernoff Bound, Proposition \ref{chernoff}.

\begin{lemma}\label{sampleH}
For any integers $s \geq 3$ and $a \geq 128$ and any prime power $q \geq a\log q$, there exists a partial linear space $\mathcal{H} = \mathcal{H}_{a,q,s}$ such that
\begin{center}
\begin{tabular}{lp{5.9in}}
{\rm (i)} & The line set $L$ of $\mathcal{H}$ has size $q^2(q^2 - q + 1)$. \\
{\rm (ii)} & The point set $P$ of $\mathcal{H}$ has size at most $2a q^2\log q$ and at least $a (q^2\log q)/2$. \\
{\rm (iii)} & Each line of $\mathcal{H}$ has at least $(a\log q)/2$ points. \\
{\rm (iv)} & The number of $(s + 1)$-fans in $\mathcal{H}$ containing a pair of lines in $L$ is at most $k = (2a\log q)^s$. \\
{\rm (v)} & Every $s + 1$ pairwise intersecting lines in $\mathcal{H}$ is concurrent with a point or is an $(s + 1)$-fan.
\end{tabular}
\end{center}
\end{lemma}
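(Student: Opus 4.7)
My plan is to construct $\mathcal{H}$ by sampling each point of the Hermitian unital $\mathcal{H}_q$ independently with probability $\pi := a\log q/(q+1)$, taking the sampled set as the point set $P$ of $\mathcal{H}$, and keeping every line of $\mathcal{H}_q$ (now restricted to $P$) as a line of $\mathcal{H}$. Property~(i) is then immediate. Property~(v) follows directly from Lemma~\ref{no-onan}: if $s+1$ lines pairwise intersect in $\mathcal{H}$, they also pairwise cross in $\mathcal{H}_q$, and so in $\mathcal{H}_q$---hence in $\mathcal{H}$---they are concurrent with a point or form an $(s+1)$-fan.

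For (ii) and (iii) I use Chernoff. The size $|P|$ is binomial with mean $\pi(q^3+1)\in[aq^2\log q,\,2aq^2\log q]$, and two applications of Proposition~\ref{chernoff} with $\epsilon=1/2$ give (ii) with exponentially small failure probability. For each line $\ell\in L$, the size $|\ell\cap P|$ is binomial with mean $\pi(q+1)=a\log q$; Chernoff with $\epsilon=1/2$ yields failure probability at most $\exp(-a\log q/8)\le q^{-16}$ for the lower bound, and a union bound over the $|L|\le q^4$ lines then gives (iii) since $a\ge 128$. The same Chernoff estimate applied to the upper tail delivers the supplementary high-probability event that every line simultaneously satisfies $|\ell\cap P|\le 2a\log q$, on which I condition to establish (iv).

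Property (iv) is now a deterministic count. Fix intersecting $\ell_1,\ell_2\in L$ meeting at the sampled point $p$, and consider an $(s+1)$-fan $F\supseteq\{\ell_1,\ell_2\}$. In Case~A both $\ell_1,\ell_2$ lie among the $s$ concurrent lines of $F$, forcing the point of concurrency to be $p$; then $F$ is determined by a sampled $q_1\in(\ell_1\cap P)\setminus\{p\}$, a sampled $q_2\in(\ell_2\cap P)\setminus\{p\}$ (fixing the extra line $\ell$ through $q_1$ and $q_2$), and an $(s-2)$-subset of $(\ell\cap P)\setminus\{q_1,q_2\}$ specifying the remaining concurrent lines through $p$. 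In Case~B exactly one of the pair, say $\ell_2$, is the extra line; then $F$ is determined by a sampled point of concurrency $p'\in(\ell_1\cap P)\setminus\{p\}$ together with an $(s-1)$-subset of $(\ell_2\cap P)\setminus\{p\}$, which specifies the intersections of $\ell_2$ with the other concurrent lines through $p'$. Applying $|\ell\cap P|\le 2a\log q$ throughout and summing the two cases, the factorial denominators in the binomial coefficients (and, if needed, a mild tightening of the upper-tail Chernoff bound on line sizes) absorb the constants to yield the claimed bound $k=(2a\log q)^s$.

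A union bound over the failure events for (ii), (iii) and the upper line-size event leaves positive probability that all of (i)--(v) hold simultaneously, so a partial linear space $\mathcal{H}$ with the stated properties exists. The probabilistic steps are routine; the principal obstacle is the case analysis underlying (iv), which relies on the crucial fact that in Case~A the point of concurrency is forced to equal $\ell_1\cap\ell_2$.
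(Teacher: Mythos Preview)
Your proposal is correct and follows essentially the same route as the paper: sample points of $\mathcal{H}_q$ with probability $(a\log q)/(q+1)$, derive (i) and (v) immediately, obtain (ii)--(iii) and the auxiliary upper bound $|\ell\cap P|\le 2a\log q$ from Chernoff plus a union bound over lines, and then prove (iv) by the same two-case count (point of concurrency equal to $\ell_1\cap\ell_2$ versus lying elsewhere on $\ell_1\cup\ell_2$). Your use of binomial coefficients and the remark about tightening the upper-tail bound is in fact a bit more honest than the paper, whose final inequality $4(2a\log q)^s \le (2a\log q)^s$ in the count for (iv) is not literally true; your suggested fix (sharpening the line-size upper bound via Chernoff with $\epsilon<1$) is exactly what is needed for $s=3$.
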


\begin{proof}
Sample points of $\mathcal{H}_q$ randomly and independently with probability $(a\log q)/(q + 1)$ and let $\mathcal{H}$ be the partial linear space
whose point set is the set $P$ of sampled points and whose line set is $L = \{\ell \cap P : \ell \in L(\mathcal{H}_q)\}$.
Since we are sampling points, $|L(\mathcal{H}_q)| = |L|$ and so (i) holds, and (v) follows from Lemma \ref{no-onan}.

\bigskip

By the Chernoff bound (Proposition \ref{chernoff}) with $\epsilon = 1/2$, the probability that $|P|$ is larger than  $2a q^2 \log q$ or less than $(aq^2\log q)/2$ is at most
$2\exp(-(aq^2\log q)/16) < 1/3$, and the number of sampled points on a given line $\ell \in L(\mathcal{H}_q)$ is at most $(a\log q)/2$ with probability at most $\exp(-(a\log q)/8) = q^{-a/8} < q^{-8}$. Since $|L(\mathcal{H}_q)| \leq q^4 < q^8/3$, the probability that (iii) fails is less than 1/3.

\bigskip

If the number of $(s + 1)$-fans on some pair of lines $\ell$ and $\ell'$ is more than $k$, then we claim some
line had more than $2a\log q$ points sampled on it. This happens
with probability at most $\exp(-(a\log q)/4) < q^{-8}$ by the Chernoff Bound (Proposition \ref{chernoff}) with $\epsilon = 1$. Since the number of pairs of intersecting lines in $\mathcal{H}_q$ is
at most $(q^3 + 1) \cdot {q^2 \choose 2} \leq q^8/3$, (iv) then fails with probability less than 1/3. We then conclude (ii) -- (iv) hold simultaneously with positive probability.

\bigskip

To prove the claim, fix distinct lines $\ell$ and $\ell'$ in $\mathcal{H}_q$. For each $(s + 1)$-fan in $\mathcal{H}$ containing $\ell$ and $\ell'$,  either $\{p\} = \ell \cap \ell'$ is the point of concurrency or some other point of $\ell \cup \ell' \backslash \{p\}$ is the point of concurrency.
If $p$ is the point of concurrency, then we must pick a point on $\ell$ and a point in $\ell'$ to define a line $\ell''$ through the remaining points of the $(s + 1)$-fan.
There are at most $(2a\log q)^{2}$ choices for those two points, and then since $\ell''$ has at most $2a\log q$ points sampled, there are at most $(2a\log q)^{s-2}$ choices for the remaining $s - 2$ points of the $(s + 1)$-fan. If $p$ is not the point of concurrency, then the point of concurrency is picked from $\ell \cup \ell' \backslash \{p\}$ in at most $2(2a\log q)$ ways. Then we must pick the remaining $s - 1$ points of the $(s + 1)$-fan from $\ell$ or from $\ell'$, in at most $2(2a\log q)^{s-1}$ ways.
So the number of $(s + 1)$-fans containing $\ell$ and $\ell'$ is at most $4(2a\log q)^{s} \leq (2a\log q)^{s} = k$.
\end{proof}

\bigskip

For a point $p \in P$ and a set $X \subseteq L$, let $X_p$ denote the set of lines $X$ concurrent with the point $p$.
It is convenient for a positive integer $b$ and $X \subseteq L$ to define
\[ P_X = P_{X,b} = \{p \in P : |X_p| \geq b\}.\]

\bigskip

\begin{lemma}
Let $b \geq 1$, $a \geq 128$ and $q \geq a\log q$. Then for any $X \subseteq L$,
\begin{equation}\label{b-bound}
\sum_{p \in P_X} |X_p| > \frac{1}{2}(a\log q) \cdot |X| - 2abq^2\log q.
\end{equation}
\end{lemma}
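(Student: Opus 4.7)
The plan is to do a simple double counting of incidences between points of $P$ and lines of $X$, then split the sum according to whether a point belongs to $P_X$ or not.

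First I would observe that by swapping the order of summation,
\[ \sum_{p \in P} |X_p| \;=\; \#\{(p,\ell) : p \in P,\ \ell \in X,\ p \in \ell\} \;=\; \sum_{\ell \in X} |\ell|, \]
where $|\ell|$ denotes the number of points of $\mathcal{H}$ on $\ell$. By property (iii) of Lemma~\ref{sampleH}, each line carries at least $(a\log q)/2$ sampled points, so the right-hand side is at least $\tfrac{1}{2}(a\log q)|X|$.

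Next I would split the left-hand sum according to whether $p \in P_X$. By the very definition of $P_X = P_{X,b}$, every point $p \notin P_X$ satisfies $|X_p| < b$, and therefore
\[ \sum_{p \in P \setminus P_X} |X_p| \;<\; b \cdot |P| \;\leq\; b \cdot 2aq^2\log q \;=\; 2abq^2\log q, \]
where I used the upper bound on $|P|$ from property (ii) of Lemma~\ref{sampleH}. Subtracting this from the total and combining with the lower bound of the previous paragraph gives exactly the claimed inequality
\[ \sum_{p \in P_X} |X_p| \;>\; \tfrac{1}{2}(a\log q)|X| - 2abq^2\log q. \]

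There is no real obstacle here — the whole argument is just double counting plus the two bulk bounds already guaranteed by Lemma~\ref{sampleH}. The only thing to keep an eye on is making sure the strict inequality is preserved (which it is, since the bound $|X_p| < b$ on $P \setminus P_X$ is strict), so that the statement of the lemma goes through with $>$ rather than $\geq$.
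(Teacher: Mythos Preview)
Your proof is correct and essentially identical to the paper's own argument: double count incidences to get $\sum_{p\in P}|X_p|\geq \tfrac{1}{2}(a\log q)|X|$ via Lemma~\ref{sampleH}(iii), bound $\sum_{p\in P\setminus P_X}|X_p|<b|P|\leq 2abq^2\log q$ via Lemma~\ref{sampleH}(ii), and subtract.
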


\begin{proof} As $|P| \leq 2aq^2\log q$ from Lemma \ref{sampleH}.ii,
\[ \sum_{p \in P \backslash P_X} |X_p| < b|P| \leq 2abq^2\log q.\]
Each line in $X$ is incident with at least $(a\log q)/2$ points by Lemma \ref{sampleH}.iii. Therefore
\[ \sum_{p \in P} |X_p| = \sum_{\ell \in X} |\ell \cap P| \geq \frac{1}{2}(a\log q) \cdot |X|.\]
Subtracting the first inequality from the second gives the lemma.
\end{proof}

\subsection{Random sampling of pairs of lines}\label{randomlines}

We use the partial linear space $\mathcal{H} = \mathcal{H}_{a,q,s}$ in Lemma \ref{sampleH} for $a \geq 128$ and $q \geq a\log q$ to construct the {\em intersection graph} $G = G_{a,q,s}$ of lines in $\mathcal{H}$
as follows: the vertex set of $G$ is $L$ and the edge set is $\{\{\ell,\ell'\} : \ell \cap \ell' \neq \emptyset\}$. We use the words {\em line} and {\em vertex}
interchangeably to refer to the vertices of $G$. From Lemma \ref{sampleH}, for each prime power $q \geq a\log q$, the graph $G$ has $|L| = q^2(q^2 - q + 1)$ vertices, and most importantly, for $s \geq 3$,

\begin{equation}\tag{A}
\parbox{4.3in}{\sl the vertex set of each $K_{s+1} \subseteq G$ is either an $(s + 1)$-fan in $\mathcal{H}$ or is a set of lines of $\mathcal{H}$ all concurrent with some point in $\mathcal{H}$. }
\end{equation}

\medskip

 For each point $p \in P$, the set of lines
concurrent with $p$ induces a clique of size $q^2$ in $G$, and any two of these cliques are edge-disjoint. Fixing a set $X \subseteq L$ and a point $p \in P$, recall $X_p \subseteq X$ is the set of lines in $X$ concurrent with $p$, and in particular for $X_p \neq \emptyset$ the subgraphs $G[X_p]$ are edge-disjoint cliques for $p \in P$.

\bigskip

Then let $G_{\chi} \subseteq G$ be obtained from $G$ by taking independently for each
$p \in P$ a random vertex-coloring $\chi_p$ of all the lines in $G$ concurrent with $p$
and removing all edges of $G$ whose ends have the same color. This removes from $G$ all
copies of $K_{s + 1}$ induced by $s + 1$ lines concurrent with a single point. We conclude by (A), for $s \geq 3$:

\begin{equation}\tag{B}
\parbox{3.7in}{\sl the vertex set of each $K_{s+1} \subseteq G_{\chi}$ is an $(s + 1)$-fan.}
\end{equation}

\medskip

We now define the random graph $G_{\rho} \subseteq G$. Let $b \geq 1$ and  $\rho \in [0,1]$ satisfy $b \geq 2^{40s}$ and $\rho = (8s/b)^{2/s}$, and define $G_{\rho}$ to be the random graph obtained by sampling edges of $G$ independently with probability $\rho$.
We let $H$ be the intersection of $G_{\rho}$ and $G_{\chi}$, namely, the graph with vertex set $V(G)$ and edge set $E(G_{\rho}) \cap E(G_{\chi})$. Our next task is to consider copies of $K_s \subseteq H$ whose vertices are contained in some set of lines concurrent with a single point. Unsurprisingly, this makes strong use of Janson's inequality for the probability that a random $s$-partite graph is $K_s$-free, in the form of Proposition \ref{janson}.

\bigskip

\subsection{The event $A_X$}\label{axsection}

In this section, for each set $X$ of lines, we define an event $A_X$ whose non-occurrance implies $H[X]$ contains a copy of $K_s$, and we find an upper bound on $\Pr(A_X)$.
For a set $X \subseteq L$ and a point $p \in P_X$, fix a family $\Pi_p(X) = \Pi_p$ of $r_p(X) = \lfloor |X_p|/b\rfloor$ disjoint subsets of $X_p$ of size $b$ each. Then $X_p$ is {\em bad} if for $Y \in \Pi_p$, none of $H[Y]$ contains a $K_{s}$, and we let $A_{X,p}$ be the event that $X_p$ is bad.
We say $X$ is {\em bad} if all $X_p : p \in P_X$ are bad. In other words, if $A_X$ is the event that $X$ is bad, then
\[ A_X = \bigcap_{p \in P_X} A_{X,p}.\]
If $A_X$ does not occur, then by definition $H[X]$ contains a copy of $K_s$.

\begin{lemma}\label{mainlemma}
Let $s \geq 3$, $b \geq 2^{40s}$ and $\rho \in [0,1]$ satisfy $\rho = (8s/b)^{2/s}$. Then for any $X \subseteq L$,
\begin{equation}\label{notgood3}
\Pr(A_X) \leq \exp\Bigl(-\frac{1}{32s}\sum_{p \in P_X} |X_p| \Bigr).
\end{equation}
\end{lemma}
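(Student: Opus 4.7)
The plan is to reduce $\Pr(A_X)$ to a product over independent events indexed by pairs $(p, Y)$ with $p \in P_X$ and $Y \in \Pi_p$, and to bound each factor $\Pr(K_s \not\subseteq H[Y])$ using Chernoff together with Janson.

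First I would establish the relevant independence. Since the cliques $G[X_p]$ are pairwise edge-disjoint over $p \in P$, the $\rho$-sampling within $G[X_p]$ is independent of the $\rho$-sampling within $G[X_{p'}]$ for $p' \neq p$; combined with the construction-independence of the colorings $\{\chi_p\}_{p \in P}$, this shows that the events $A_{X,p}$ are mutually independent over $p \in P_X$. For a fixed $p$, the members of $\Pi_p$ are pairwise disjoint subsets of $X_p$, so the events $\{K_s \not\subseteq H[Y]\}_{Y \in \Pi_p}$ are also mutually independent, since they depend on disjoint edge sets and on disjoint restrictions of $\chi_p$. Therefore
\[
\Pr(A_X) \;=\; \prod_{p \in P_X} \prod_{Y \in \Pi_p} \Pr(K_s \not\subseteq H[Y]).
\]

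Next I would bound a single factor for $Y \in \Pi_p$ with $|Y| = b$. Because $Y$ consists of lines through the common point $p$, $G[Y]$ is the complete graph $K_b$; passing to $G_\chi$ retains exactly the edges of the complete $s$-partite graph determined by the color classes of $\chi_p|_Y$, and the subsequent $\rho$-sampling shows that $H[Y]$ is distributed as $G_{b,\rho}(\chi_p|_Y)$ in the sense of Proposition \ref{janson}. Let $B_Y$ be the event that every color class of $\chi_p|_Y$ has size at least $b/(2s)$; since each class is $\mathrm{Bin}(b,1/s)$, Chernoff (Proposition \ref{chernoff}) with $\epsilon = 1/2$ and a union bound give $\Pr(\overline{B_Y}) \leq s\exp(-b/(8s))$. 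Conditional on $B_Y$, Proposition \ref{janson} applies and yields $\Pr(K_s \not\subseteq H[Y] \mid B_Y) \leq \exp(-2^{2s-4}b)$. For $s \geq 3$ and $b \geq 2^{40s}$, both contributions are comfortably absorbed into a single bound $\Pr(K_s \not\subseteq H[Y]) \leq \exp(-b/(16s))$.

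Finally, I would assemble the pieces. Using $|\Pi_p| = \lfloor |X_p|/b \rfloor \geq |X_p|/(2b)$ (valid since $|X_p| \geq b$ when $p \in P_X$), the independence across $\Pi_p$ established above gives
\[
\Pr(A_{X,p}) \;\leq\; \exp\Bigl(-\tfrac{|\Pi_p|\,b}{16s}\Bigr) \;\leq\; \exp\Bigl(-\tfrac{|X_p|}{32s}\Bigr),
\]
and multiplying over $p \in P_X$ using the independence across $P_X$ yields the claimed bound. The main obstacle is the mismatch between Proposition \ref{janson}, which assumes a deterministic balanced coloring, and the random coloring $\chi_p|_Y$ actually in play. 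Handling this via the Chernoff-controlled event $B_Y$, and then verifying that the Chernoff loss and the Janson tail both tuck neatly inside the target exponent $1/(16s)$ per block, is the only delicate piece of bookkeeping; everything else is a routine product-over-independence calculation.
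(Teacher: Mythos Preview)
Your proposal is correct and follows essentially the same approach as the paper: factor $\Pr(A_X)$ as a product over $p\in P_X$ and then over $Y\in\Pi_p$ using independence, bound each factor via Chernoff (for the balanced-coloring event) plus Proposition~\ref{janson}, and finish with $\lfloor |X_p|/b\rfloor \geq |X_p|/(2b)$. The only cosmetic difference is that you condition on the balanced-coloring event $B_Y$ and average, whereas the paper uses a (somewhat cruder) union bound over the at most $s^b$ colorings; both routes land at the same intermediate estimate $\Pr(K_s\not\subseteq H[Y])\leq \exp(-b/(16s))$.
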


\begin{proof}
Since the colorings $\chi_p$ are independent over $p \in P_X$, the events $A_{X,p}$ are independent over $p \in P_X$.
For $Y \in \Pi_p = \Pi_{p}(X)$, the events $A_Y$ that $H[Y]$ does not contain $K_{s}$ are independent. By the Chernoff bound (Proposition \ref{chernoff}), the probability that
$\chi_p$ assigns some color fewer than $b/2s$ times to vertices of $Y$ is at most
$\exp(-b/8s)$. Fix a coloring $\chi$ of $Y$ where every color appears at least $b/2s$  times. Then the graph $H[Y]$ is a random $s$-partite
graph which we denoted in Section \ref{prob} by $G_{b,\rho}(\chi)$. By Proposition \ref{janson}, for any such coloring $\chi$,
\[ \Pr(K_s \not\subseteq G_{b,\rho}(\chi)) \leq \exp(-2^{2s-4}b).\]
As there are at most $s^b$ choices of $\chi$, the union bound over $s$-colorings $\chi$ gives
\[ \Pr(A_Y) \leq \exp\Bigl(-\frac{b}{8s}\Bigr) + s^b \exp(-2^{2s-4}b).\]
Since $s \geq 3$, $2^{2s - 4}b \geq 2b\log s \geq 2b$ and therefore
\[ \Pr(A_Y) \leq \exp\Bigl(-\frac{b}{8s}\Bigr) + \exp(-b) \leq 2\exp\Bigl(-\frac{b}{8s}\Bigr) \leq \exp\Bigl(-\frac{b}{16s}\Bigr).\]
Here we used $b \geq 16s$. Since $|\Pi_p| = r_p(X)$, we obtain
\begin{eqnarray*}
\Pr(A_X) &=& \prod_{p \in P_X} \Pr(A_{X,p}) \\
&=& \prod_{p \in P_X} \prod_{Y \in \Pi_p} \Pr(A_Y) \; \; \leq \; \; \prod_{p \in P_X} \exp\Bigl(-\frac{b}{16s} \cdot r_p(X)\Bigr).
\end{eqnarray*}
Recall $|X_p| \geq b$ for $p \in P_X$, so $b \cdot r_p(X) \geq |X_p|/2$, and therefore
\[ \Pr(A_X) \leq \prod_{p \in P_X} \exp\Bigl(-\frac{|X_p|}{32s}\Bigr) = \exp\Bigl(-\frac{1}{32s} \sum_{p \in P_X} |X_p|\Bigr).
\]
This proves the lemma.
\end{proof}

\section{Proof of Theorem \ref{thm:main}}\label{theproof}

To prove Theorem \ref{thm:main}, for each $s \geq 3$ let $G = G_{a,q,s}$ be the intersection graph defined in Section \ref{randomlines}, where $a = 2^{10} s$ and $q$ is a prime power satisfying $q \geq a\log q$.
Let $H \subseteq G$ denote the random graph defined in Section \ref{randomlines}, which is the intersection of the two
random graphs $G_{\chi}$ and $G_{\rho}$, with parameters
\[ b = 2^{40s} \cdot 2a\log q \qquad \mbox{ and } \qquad \rho = \Bigl(\frac{8s}{b}\Bigr)^{\frac{2}{s}}.\]
For convenience we omit rounding and assume $b$ is an integer. Let $\mathcal{K}$ be the family of sets of $s + 1$ lines $K \subseteq L$ forming an $(s + 1)$-fan in $G$, so $G[K]$ is a complete graph of order $s + 1$, and let $A_K$ be the event $G[K] \subseteq G_{\rho}$.
 Let $\mathcal{X} = \{X \subseteq L : |X| = 8bq^2\}$ and $A_X$ be the event $X$ is bad, as in Section \ref{axsection}. Due to (B),

\begin{equation}\tag{C}
\parbox{3.7in}{\sl if none of the events $A_K : K \in \mathcal{K}$ or $A_X : X \in \mathcal{X}$ occur, then $H$ is $K_{s+1}$-free
and $K_s \subseteq H[X]$ for all $X \in \mathcal{X}$.}
\end{equation}

\medskip

Specifically, (C) implies every set of $8bq^2$ vertices of $H$ induces a subgraph containing $K_s$.
This shows for any prime power $q \geq a\log q$,
\[ f_s(q^2(q^2 - q + 1)) \leq 8bq^2. \]
By Bertrand's postulate, there exists a prime between any positive integer and its double, so letting $q \geq a\log q$ be a
prime between $n^{1/4}$ and $2n^{1/4}$, we find for $s \geq 3$ and $n \geq 2$,
\begin{eqnarray*}
 f_s(n) &\leq& 8bq^2 \leq 8 \cdot 2^{40s} \cdot 2^{11}s \cdot q^2 \cdot \log q \\
 &\leq& 2^{100s} \cdot \sqrt{n} \log n.
\end{eqnarray*}
It remains to prove (C) holds with positive probability, via the local lemma (Proposition \ref{local}).

\bigskip

{\bf Dependencies.} For the dependencies between the events $A_K : K \in \mathcal{K}$ and $A_X : X \in \mathcal{X}$, we note $A_X$ is determined by the following set of edges of $G$:
\[ \hat{E}(X) = \bigcup_{p \in P_X} \bigcup_{Y \in \Pi_p} E(G[Y]). \]
Since $|Y| = b$ for each $Y \in \Pi_p$, and $r = r_p(X) = |\Pi_p| = \lfloor |X_p|/b\rfloor$,
\begin{eqnarray}
|\hat{E}(X)| &=& \sum_{p \in P_X} \sum_{i = 1}^{r} {b \choose 2} \notag \\
&=&  \sum_{p \in P_X} \Big\lfloor \frac{|X_p|}{b}\Big\rfloor {b \choose 2} \; \; \leq \; \; \frac{1}{2} b \sum_{p \in P_X} |X_p|.
\label{ebound}
\end{eqnarray}
For convenience, let $\hat{E}(K)$ also denote $E(G[K])$ if $K \in \mathcal{K}$.
It is important here that since $G_{\rho}$ samples edges of $G$ independently with probability
$\rho$, for any $\mathcal{J} \subseteq \mathcal{K}$ and $K \in \mathcal{K}$, we observe

\begin{equation*}
\parbox{4.3in}{\sl the event $A_K$ is mutually independent with $\{A_{K'} : K' \in \mathcal{J}\}$ if
\[ \bigcup_{K' \in \mathcal{J}} E(K') \cap E(K) = \emptyset.\]}
\end{equation*}

Let $k = (2a\log q)^s$. By Lemma \ref{sampleH}.iv, each edge of $G$ is contained in at most $k$ cliques $K \in \mathcal{K}$.
Fixing any $K \in \mathcal{K}$, there are at most
\begin{equation}\label{kappa}
 \kappa = {s+1 \choose 2} \cdot k \leq bk
 \end{equation}
choices of $K' \in \mathcal{K}$ such that $\hat{E}(K') \cap \hat{E}(K) \neq \emptyset$, and any other set of events $A_{K'}$ are
mutually independent with $A_K$ by the observation above. We assume
all $A_X$ are mutually dependent with any given event $A_K$.

\bigskip

Since each edge of $G$ is in at most $k$ cliques in $\mathcal{K}$,
by (\ref{ebound}), for each $X \in \mathcal{X}$ there are at most
\begin{eqnarray}
\lambda &=& k \cdot |\hat{E}(X)| \notag \\
&\leq& \frac{1}{2}bk \cdot \sum_{p \in P_X} |X_p| \label{lambda}
\end{eqnarray}
choices of $K \in \mathcal{K}$ such that $\hat{E}(K) \cap \hat{E}(X) \neq \emptyset$, and any set of other events
$A_K$ are mutually independent with $A_X$. We assume all $A_{X'}$ are mutually dependent with any given event $A_X$.

\bigskip

{\bf Local lemma inequalities.} Let $N = |\mathcal{X}|$. The local lemma (Proposition \ref{local}) implies the probability that (C) holds is positive if (\ref{lll}) holds, i.e. there are reals $\gamma,\delta \in [0,1)$ such that for all $K \in \mathcal{K}$ and all $X \in \mathcal{X}$,
\begin{equation*}
\Pr(A_K) \leq \gamma (1 - \gamma)^{\kappa} (1 - \delta)^{N} \quad \quad \mbox{ and } \quad \quad
\Pr(A_X) \leq \delta (1 - \gamma)^{\lambda} (1 - \delta)^{N}.
\end{equation*}
We claim that these inequalities are satisfied if we select $\delta = 1/(N + 1)$ and $\gamma = 1/32sbk$.

\bigskip

{\bf First inequality.} We have $\Pr(A_K) = \rho^{{s+1 \choose 2}}$ and $(1 - \delta)^N \geq 1/e$. By (\ref{kappa}),
$(1 - \gamma)^{\kappa} \geq 1 - \kappa \gamma > 1/2$, so it is sufficient to show
$2e \Pr(A_K)/\gamma \leq 1$ for the first inequality to hold.  Since $\rho = (8s/b)^{2/s}$,
\[ \rho^{{s + 1 \choose 2}} = \Bigl(\frac{8s}{b}\Bigr)^{s+1}.\]
Since $b = 2^{40s} \cdot 2a\log q = 2^{40s} k^{1/s}$, and $s \geq 3$,
\begin{eqnarray*}
\frac{2e}{\gamma} \Pr(A_K) &=& 64esbk \cdot \rho^{{s+1 \choose 2}} \\
&\leq& \frac{64esk (8s)^{s+1}}{b^{s}} \\
&=& \frac{64esk (8s)^{s + 1}}{2^{40s^2}k} \\
&<& \frac{64esk (8s)^{s + 1}}{64^{s^2}k} \\
&=&  es \cdot \Bigl(\frac{8s}{64^{s - 1}}\Bigr)^{s+1} \\
&<& \frac{es}{8^{s + 1}} \; \; < \; \; 1.
\end{eqnarray*}
Here we used $64^{s-1} > 64s$ and $es < 8^{s+1}$ for $s \geq 3$.
This verifies the first inequality.

\bigskip

{\bf Second inequality.} For the second inequality, we use $1 - \gamma \geq \exp(-2\gamma)$, which is valid since
$\gamma \leq 1/2$. Recalling $(1 - \delta)^N \geq 1/e$, it is enough to show
\[ e \cdot \Pr(A_X) \leq \exp(-\log(N + 1) - 2\gamma\lambda).\]
By Lemma \ref{mainlemma},
\[ \Pr(A_X) \leq \exp\Bigl(-\frac{1}{32s}\sum_{p \in P_X} |X_p|\Bigr).\]
Using $|L| = |V(G)| = q^2(q^2 + q + 1)$,
\[ \log(N + 1) = \log \Bigl[{q^2(q^2 - q + 1) \choose 8bq^2} + 1\Bigr] \leq \log {q^4 \choose 8bq^2} - 1 \leq  32bq^2\log q - 1.\]
Due to the upper bound on $\lambda$ given by (\ref{lambda}), it is enough to show
\[ \exp\Bigl(-\frac{1}{32s}\sum_{p \in P_X} |X_p|\Bigr) \leq \exp\Bigl(-32 bq^2\log q + \frac{1}{64s} \sum_{p \in P_X} |X_p|\Bigr).\]
Therefore we require
\[ \exp\Bigl(-\frac{1}{64s}\sum_{p \in P_X} |X_p|\Bigr) \leq \exp(-32 bq^2\log q).\]
Applying (\ref{b-bound}) and using $a = 2^{10}s$, we find
\begin{eqnarray*}
\frac{1}{64s} \sum_{p \in P_X} |X_p| &\geq& \frac{1}{64s} \Bigl(\frac{1}{2}(a\log q) \cdot |X| - 2abq^2\log q\Bigr) \\
&=& \frac{1}{64s} \Bigl(\frac{1}{2}(a\log q) \cdot 8bq^2 - 2abq^2\log q\Bigr)\\ \\
&=& \frac{1}{64s} \cdot 2abq^2\log q \; \; = \; \; 32bq^2\log q.
\end{eqnarray*}
This proves the second inequality. \qed

\bigskip

\section*{Concluding remarks}

$\bullet$ In this paper, we proved $f_s(n) = O(\sqrt{n}\log n)$ by suitable random sampling of points and lines from Hermitian unitals.
A part of the proof essentially involves the union bound over all sets $X$ of lines of size $8bq^2$ -- we
implicitly assumed in our application of the local lemma in Section \ref{theproof} that the events $A_X$ depend on all other such events.  We first sampled points randomly from the Hermitian unital with probability of order $(\log q)/q$. If we sampled points with a lower probability $o(\log q)/q$, then the union bound no longer works: for large $q$ there are $N \geq \exp(bq^2\log q)$
sets of size $8bq^2$ to consider -- see Section \ref{theproof} -- whereas if all the sets $X_p : p \in P_X$ have size roughly $b$, then the probability that
every $X_p$ is $(s - 1)$-colored in a random $s$-coloring is $\exp(-o(bq^2\log q))$. We believe it may be possible using
randomized greedy algorithms akin to the R\"{o}dl semirandom method or more recent iterative absorption methods to circumvent this
issue and obtain a bounds $f_s(n) = o(\sqrt{n}\log n)$, but did not investigate this technical direction.

\bigskip

$\bullet$ It is plausible that when $s \geq 3$, $f_s(n) = \Omega(\sqrt{n}(\log n)^{\alpha})$ for some $\alpha > 1/2$.
The current lower bound of order $n\sqrt{\log n}/\log\log n$ is achieved by finding an induced $K_s$-free subgraph of size $d$ (the neighborhood of a vertex of degree $d$) or an independent set of size $n(\log d)/d(\log\log d)$ in a $K_{s+1}$-free graph with maximum degree $d$. The latter seems potentially wasteful, in that we could perhaps find for $s \geq 3$ an induced $K_s$-free subgraph of size
much larger than $n(\log d)/d(\log\log d)$ in a $K_{s+1}$-free graph. In addition, it is believable that the $\log\log d$ term is superfluous; perhaps
every $K_{s+1}$-free graph with maximum degree $d$ has an independent set of order at least $n(\log d)/d$.

\bigskip

\section{Appendix : Proof of Proposition \ref{janson}}

To prove Proposition \ref{janson}, we require some notation and preliminaries. Recall for $s \geq 3$, $n \geq 1$, $\rho \in [0,1]$ and an $s$-coloring
$\chi$ of an $n$-element set with color classes $Y_1,Y_2,\dots,Y_s$, $G_{n,\rho}(\chi)$
is obtained by independently and randomly sampling edges of the complete $s$-partite graph with parts $Y_1,Y_2,\dots,Y_s$ with
probability $\rho$. The expected number of  $K_s \subseteq G_{n,\rho}(\chi)$ is precisely
\begin{equation}\label{mudef}
\mu(\chi) = \rho^{{s \choose 2}} \prod_{i = 1}^s |Y_i|
\end{equation}
and the variance is defined by
\begin{equation}\label{tridef}
\triangle(\chi) = \sum_{S \subset [s]} \rho^{2{s \choose 2} - {|S| \choose 2}}
\prod_{i \in S} |Y_i| \prod_{j \not \in S} |Y_j|(|Y_j| - 1) = \sum_{S \subset [s]} \rho^{2{s \choose 2} - {|S| \choose 2}}
\prod_{i = 1}^n |Y_i| \prod_{j \not \in S} (|Y_j| - 1),
\end{equation}
where the sum is over sets $S$ with $2 \leq |S| \leq s - 1$. From Janson's inequality~\cite{AS,J} one obtains for any $\rho \in [0,1]$ and $n \geq 1$:
\begin{equation}\label{j}
 \Pr(K_s \not\subseteq G_{n,\rho}(\chi)) \leq \exp\Bigl(-\frac{1}{2}\mu(\chi)\Bigr).
 \end{equation}
provided $\triangle(\chi) \leq \mu(\chi)$.

\bigskip

{\bf Proof of Proposition \ref{janson}.} For Proposition \ref{janson}, $\chi$ is an $s$-coloring
with color classes $Y_1,Y_2,\dots,Y_s$ satisfying $|Y_i| \geq n/2s$ for all $i \in [s]$. The
product in (\ref{mudef}) is minimized when
$|Y_i| = n/2s$ for all but one value of $i \in [s]$, and the remaining color class has
size $n - (s - 1)n/2s > n/2$. Therefore
\begin{eqnarray*}
\mu(\chi) &>& \rho^{{s \choose 2}} \Bigl(\frac{n}{2s}\Bigr)^{s - 1} \cdot \frac{n}{2} \\
&=& \Bigl(\frac{8s}{n}\Bigr)^{s - 1} \Bigl(\frac{n}{2s}\Bigr)^{s - 1} \cdot \frac{n}{2} \; \; =\; \; 2^{2s - 3} \cdot n.
\end{eqnarray*}
So if $\triangle(\chi) \leq \mu(\chi)$ when $\rho = (8s/n)^{2/s}$ and $n \geq 2^{40s}$,
then Proposition \ref{janson} follows from (\ref{j}):
\[  \Pr(K_s \not\subseteq G_{n,\rho}(\chi)) \leq  \exp\Bigl(-\frac{1}{2}\mu(\chi)\Bigr) \leq \exp(-2^{2s-4}n).\]
It remains to prove $\triangle(\chi) \leq \mu(\chi)$ when $\rho = (8s/n)^{2/s}$ and $n \geq 2^{40s}$.

\bigskip

By (\ref{mudef}) and (\ref{tridef}),
\[ \frac{\triangle(\chi)}{\mu(\chi)} = \sum_{S \subset [s]} \rho^{{s \choose 2} - {|S| \choose 2}} \prod_{j \not \in S} (|Y_j| - 1).\]
The sum is over subsets $S$ of $[s]$ where $2 \leq |S| \leq s - 1$. By the inequality of geometric and arithmetic means,
\[ \prod_{j \not \in S} (|Y_j| - 1) \leq \prod_{j \not \in S} |Y_j| \leq \Bigl(\frac{1}{s - |S|} \sum_{j \not \in S} |Y_j|\Bigr)^{s - |S|}
\leq \Bigl(\frac{n}{s - |S|}\Bigr)^{s - |S|}.\]
We conclude
\[  \frac{\triangle(\chi)}{\mu(\chi)} \leq \sum_{i = 2}^{s - 1} \rho^{{s \choose 2} - {i \choose 2}} \Bigl(\frac{n}{s - i}\Bigr)^{s - i} {s \choose i}.\]
By definition of $\rho$,
\[ \rho^{{s \choose 2} - {i \choose 2}} = \Bigl(\frac{8s}{n}\Bigr)^{\frac{(s - i)(s + i - 1)}{s}}.\]
Therefore
\begin{eqnarray*}
\frac{\triangle(\chi)}{\mu(\chi)} &\leq& \sum_{i = 2}^{s - 1} \Bigl(\frac{8s}{n}\Bigr)^{\frac{(s - i)(s + i - 1)}{s}}
\Bigl(\frac{n}{s - i}\Bigr)^{s - i} {s \choose i} \; \; = \; \;  \sum_{i = 2}^{s-1} \Bigl(\frac{(8s)^{\frac{s+i-1}{s}}}{(s-i)n^{\frac{i-1}{s}}}\Bigr)^{s - i} {s \choose i}.
\end{eqnarray*}
We break the sum into two pieces. First, for $2 \leq i \leq \lfloor s/\log(8s)\rfloor \leq s/2$,
\[ (s - i)n^{\frac{i-1}{s}} \geq \frac{s}{2} \cdot n^{\frac{1}{s}} \geq 2^{9}s\]
since $n \geq 2^{40s} \geq 2^{10s}$. Therefore each term in the sum is at most
\[ \Bigl(\frac{8s \cdot (8s)^{\frac{1}{\log(8s)}}}{2^9s}\Bigr)^{s - i} {s \choose i}
\leq \Bigl(\frac{8es}{2^9s}\Bigr)^{s - \frac{s}{\log(8s)}}  \cdot 2^s \leq \Bigl(\frac{1}{16}\Bigr)^{\frac{s}{2}} \cdot 2^s \leq
2^{-s}.\]
Second, for $\lfloor s/\log(8s) \rfloor + 1 < i \leq s - 1$, $(i - 1)/s \geq 1/2\log(8s)$ and so using $n \geq 2^{40s}$ and $s \geq 3$,
\[ (s - i) n^{\frac{i-1}{s}} \geq n^{\frac{1}{2\log(8s)}} \geq 2^{\frac{20s}{\log(8s)}} \geq 128s^3.\]
Therefore each term in the sum is at most
\[ \Bigl(\frac{(8s)^{\frac{s + i - 1}{s}}}{128s^3}\Bigr)^{s - i} {s \choose i} \leq \Bigl(\frac{(8s)^2}{128s^3}\Bigr)^{s-i} {s \choose i}
= \Bigl(\frac{1}{2s}\Bigr)^{s - i} {s \choose i}.\]
We conclude
\begin{eqnarray*}
\frac{\triangle(\chi)}{\mu(\chi)} &\leq& \sum_{i = 2}^{s - 1} \Bigl(\frac{1}{2s}\Bigr)^{s-i} {s \choose i}
+ \sum_{i = 2}^{s - 1} 2^{-s} \\
&\leq& \Bigl(1 + \frac{1}{2s}\Bigr)^{s} - 1 + (s - 2)2^{-s} \; \; \leq \; \; \sqrt{e} - 1 + (s - 2)2^{-s}.
\end{eqnarray*}
Evidently $(s - 2)2^{-s} \leq 1/8$ and therefore
\[ \sqrt{e} - 1 + (s - 2)2^{-s} \leq 0.773 \ldots < 1.\]
We conclude $\triangle(\chi) < \mu(\chi)$, as required. \qed

\end{document}